\theoremstyle{plain}
\newtheorem{theorem}                {Theorem}      [section]
\newtheorem{proposition}  [theorem]  {Proposition}
\newtheorem{corollary}    [theorem]  {Corollary}
\theoremstyle{definition}
\newtheorem{remark}       [theorem]  {Remark}
\numberwithin{equation}{section}
\def \R{{\mathbb R}}
\def \s{{\mathbb S}}
\def \imto{\hookrightarrow}
\DeclareMathOperator{\cst}{constant}
\DeclareMathOperator{\grad}{grad}
\DeclareMathOperator{\trace}{trace}
\DeclareMathOperator{\Div}{div}
\numberwithin{equation}{section}
\def \i{\mathrm i}
\begin{document}

\title[]{Biconservative  surfaces}

\author{S.~Montaldo}
\address{Universit\`a degli Studi di Cagliari\\
Dipartimento di Matematica e Informatica\\
Via Ospedale 72\\
09124 Cagliari, Italia}
\email{montaldo@unica.it}

\author{C.~Oniciuc}
\address{Faculty of Mathematics\\ ``Al.I. Cuza'' University of Iasi\\
Bd. Carol I no. 11 \\
700506 Iasi, ROMANIA}
\email{oniciucc@uaic.ro}

\author{A.~Ratto}
\address{Universit\`a degli Studi di Cagliari\\
Dipartimento di Matematica e Informatica\\
Viale Merello 92\\
09123 Cagliari, Italia}
\email{rattoa@unica.it}

\begin{abstract}
In this work we obtain some geometric properties of biconservative
surfaces into a Riemannian manifold. In particular, we shall study the
relationship between biconservative surfaces and  the holomorphicity of a generalized Hopf function.
Also, we give a complete classification of CMC biconservative surfaces in a $4$-dimensional space form.
\end{abstract}

\subjclass[2000]{58E20}

\keywords{Biharmonic maps, biconservative immersions, Hopf differential}

\thanks{Work supported by: PRIN 2010/11 -- Variet\`a reali e complesse: geometria, topologia e analisi armonica N. 2010NNBZ78 003 -- Italy; GNSAGA -- INdAM, Italy;
Romanian National Authority for Scientific Research, CNCS -- UEFISCDI, project number PN-II-RU-TE-2011-3-0108}

\maketitle

\section{Introduction}\label{intro}
For an immersion $\varphi:M^2\imto N^3(c)$ of an oriented surface into a three-dimensional space form of constant sectional curvature $c$, the classical Hopf function is defined by
\begin{equation}\label{eq:hopf-classical}
\Phi(z,\bar{z})=\langle B(\partial_z,\partial_z),\eta \rangle=\langle A(\partial_z),\partial_z \rangle\,,
\end{equation}
where $\eta$ is the global unit normal vector field, $B$ is the second fundamental form, $A$ is the shape operator  and, with respect to isothermal coordinates $(x,y)$ on $M^2$,
$$
z=x+\i y\,,\quad
\partial_z=\frac{1}{2}\left(\frac{\partial}{\partial x}-\i \frac{\partial}{\partial y}\right)\,,\quad
\partial_{\bar{z}}=\frac{1}{2}\left(\frac{\partial}{\partial x}+\i \frac{\partial}{\partial y}\right)\,.
$$
The Hopf function defined in \eqref{eq:hopf-classical} is the key ingredient in the proof of the famous Hopf's Theorem: {\it a CMC immersed sphere in $N^3(c)$ is a round sphere}. Hopf's proof is based on the following facts: (i) $\Phi$ is holomorphic if and only if $M$ is CMC; (ii) $\Phi=0$ at umbilical points;
(iii) on a topological sphere any holomorphic quadratic differential vanishes everywhere.\\

As for immersions $\varphi:M^2\imto N^n(c)$, $n\geq 4$, the function $\Phi$ cannot be defined and a fair substitute is the function
\begin{equation}\label{eq:Q-classical}
Q(z,\bar{z})=\langle B(\partial_z,\partial_z),H\rangle=\langle A_H(\partial_z),\partial_z \rangle\,,
\end{equation}
where $H=(1/2)\, \trace B$ is the mean curvature vector field.  A classical result, see \cite{Hof73, Y}, states that if the surface $M$ has parallel mean curvature (PMC) then $Q$ is holomorphic. The function $Q$ in \eqref{eq:Q-classical} was recently used by Loubeau, Oniciuc (see \cite{LO}) and, in particular, enabled them to prove several rigidity results for biharmonic surfaces. We point out that the functions $\Phi, \, Q$ above are defined \emph{locally} on the surface $M$. Indeed, to give them a global meaning, one should consider their associated quadratic differentials (see \cite{Hopf}). However, for the sake of simplicity, we decided to work with functions because most of our calculations and results are of a local nature.\\

A natural question is to ask whether the converse of the Hoffman and Yau result holds, that is, if a surface with $Q$ holomorphic is PMC. This question
is interesting also for surfaces $M^2\imto N^3(c)$. In this case, it turns out (see Proposition~\ref{pro:Q-holomorphic-A-gradf} below) that
$Q$ is holomorphic if and only if the following condition is satisfied :
\begin{equation}\label{eq:Q-holomorphic}
A(\grad f)=0\,,
\end{equation}
where we have put $H=f\, \eta$. Since $\det A = K^M - c\,$, where $K^M$ is the Gaussian curvature of $M$, it follows that, if $K^M\neq c$, then $Q$ is holomorphic if and only if $M$ is CMC. On the other hand, a solution of \eqref{eq:Q-holomorphic} which is not CMC must have $K^M=c$, and there exist examples. For instance, in ${\mathbb R}^3$, we shall show that the cone parametrized by
$$
X(u,v)=\left(\frac{1}{k}\,(1-\alpha v) \cos(k u), \frac{1}{k}\,(1-\alpha v) \sin(k u), \beta v\right)\,,
$$
where $\alpha$, $k$ and $\beta$ are constants satisfying $\alpha^2+k^2 \beta^2=k^2$ and $\alpha\neq 0$, has $Q$ holomorphic and it is not CMC (see Section~\ref{sec:Q-holomorphic} below for details).\\

The main motivation of this paper is to study the relationship between the holomorphicity of $Q$ and the constancy of the mean curvature in a more general context. More specifically, we shall study immersed surfaces $M^2\imto N^n$, the starting point being the fact that the definition \eqref{eq:Q-classical} of the function $Q$ can be adopted also for a surface into any Riemannian manifold $(N^n,h)$. Moreover, $Q=0$ describes the pseudo-umbilical points.\\

In this more general framework, the family of immersed surfaces $M^2\imto N^n$ which turns out to be suitable for our purposes is that of \emph{biconservative surfaces}. Biconservative immersions are a rapidly developping topic (see Section \ref{sec:stress-energy-tensor} below for complete definitions and details). In simple words, they are defined as immersions for which the tangential component of the bitension field vanishes.
In particular (see Theorem \ref{teo:Q-hol-h-constant-biconservative} below), we shall prove that, if $M^2\imto N^n$ is biconservative, then
the holomorphicity of $Q$ is equivalent to the constancy of the mean curvature.

A key ingredient to obtain this result is the symmetric $(0,2)$-tensor, associated to any immersion $\varphi:M^2\to (N^n,h)$, defined by
\begin{equation}\label{eq:bistress-introduction}
S_2=-2 |H|^2 g+ 4 A_H\,,
\end{equation}
where $g=\varphi^{\ast}h=\langle,\,\rangle$. The tensor $S_2$ is called the {\it stress-bienergy} tensor and, as we will describe in Section~\ref{sec:stress-energy-tensor}, it has a variational meaning. In particular, an immersed surface $\varphi:M^2\to (N^n,h)$ is biconservative if $\Div S_2=0$.
More specifically, we will show that Theorem~\ref{teo:Q-hol-h-constant-biconservative} is a consequence of the fact that, for a biconservative surface, $4|H|^2=\trace S_2=\cst$ if and only if  $S_2(\partial_z,\partial_z)=4 Q(z,\bar{z})$ is holomorphic.\\

If $\varphi:M^2\to N^3(c)$ is an immersed surface into a three-dimensional space form, then
\begin{equation}\label{eq:biconservative-intro}
\frac{1}{4} \Div S_2= A(\grad f)+\,f\, \grad f \,\, ,
\end{equation}
which implies that CMC surfaces are automatically biconservative. In \cite{CMOP2013}, followed by \cite{Fu}, the authors provided the complete classification of biconservative surfaces into a $3$-dimensional space form which are not CMC.\\

In codimension greater than or equal to 2, PMC surfaces into space forms are automatically biconservative, while CMC surfaces are not. Therefore, it makes sense to study CMC biconservative surfaces. In this paper, we shall consider the simplest case of codimension 2 and prove the following theorem:\\

{\bf Theorem~\ref{teo:m2n4biconservative}}. {\it Let  $M^2\imto N^4(c)$ be a CMC biconservative surface into a space form of constant sectional curvature $c\neq 0$. Then $M^2$ is PMC.}\\

Thus, there could exist a CMC biconservative surface in $N^4(c)$, which is not PMC, only when $c=0$, and in this case we shall show that,
locally, the surface is given by
$$
X(u,v)=(\gamma(u),v+a)=(\gamma^1(u),\gamma^2(u),\gamma^3(u),v+a),\quad a\in\R \,\, ,
$$
where $ \gamma:I\to \R^3$ is a curve in $\R^3$ with constant curvature.

\section{Biharmonic maps, stress-energy tensors and biconservative immersions}\label{sec:stress-energy-tensor}

As described by Hilbert in~\cite{DH}, the {\it stress-energy}
tensor associated to a variational problem is a symmetric
$2$-covariant tensor $S$ which is conservative at critical points,
i.e. with $\Div S=0$.

In the context of harmonic maps $\varphi:(M,g)\to (N,h)$ between two Riemannian manifolds,
that is critical points of the {\em energy} functional
\begin{equation}\label{energia}
E(\varphi)=\frac{1}{2}\int_{M}\,|d\varphi|^2\,dv_g \,\, ,
\end{equation}
the stress-energy tensor was studied in detail by
Baird and Eells in~\cite{PBJE} (see also \cite{AS} and \cite{BR}). Indeed, the Euler-Lagrange
equation associated to the energy functional \eqref{energia} is equivalent to the vanishing of the tension
field $\tau(\varphi)=\trace\nabla d\varphi$ (see \cite{ES}), and the tensor
$$
S=\frac{1}{2}\vert d\varphi\vert^2 g - \varphi^{\ast}h
$$
satisfies $\Div S=-\langle\tau(\varphi),d\varphi\rangle$. Therefore, $\Div S=0$ when the map is harmonic.

\begin{remark}\label{remark:conservative}
We point out that, in the case of isometric immersions, the condition $\Div S=0$ is always satisfied,
 since $\tau(\varphi)$ is normal.
\end{remark}

A natural generalization of harmonic maps are the so-called {\it biharmonic maps}: these maps are the critical points of the bienergy functional (as suggested by Eells--Lemaire \cite{EL83})
\begin{equation}\label{bienergia}
    E_2(\varphi)=\frac{1}{2}\int_{M}\,|\tau (\varphi)|^2\,dv_g \,\, .
\end{equation}
In \cite{Jiang} G.~Jiang showed that the Euler-Lagrange equation associated to $E_2(\varphi)$ is given by the vanishing  of the bitension field
\begin{equation}\label{bitensionfield}
 \tau_2(\varphi) = - \Delta \tau(\varphi)- \trace R^N(d \varphi, \tau(\varphi)) d \varphi  \,\, ,
\end{equation}
where $\Delta$ is the rough Laplacian on sections of $\varphi^{-1} \, (TN)$ that, for a local orthonormal frame $\{e_i\}_{i=1}^m$ on $M$, is defined by
$$
    \Delta=-\sum_{i=1}^m\{\nabla^{\varphi}_{e_i}
    \nabla^{\varphi}_{e_i}-\nabla^{\varphi}_
    {\nabla^{M}_{e_i}e_i}\}\,\,.
$$
The curvature operator on $(N,h)$, which also appears in \eqref{bitensionfield}, can be computed by means of
$$
    R^N (X,Y)= \nabla_X \nabla_Y - \nabla_Y \nabla_X -\nabla_{[X,Y]} \,\, .
$$

The study of the stress-energy tensor for the
bienergy was initiated  in \cite{GYJ2} and afterwards developed in \cite{LMO}. Its expression is \begin{eqnarray}\label{eq:stress-bienergy-tensor}
S_2(X,Y)&=&\frac{1}{2}\vert\tau(\varphi)\vert^2\langle X,Y\rangle+
\langle d\varphi,\nabla\tau(\varphi)\rangle \langle X,Y\rangle \\
\nonumber && -\langle d\varphi(X), \nabla_Y\tau(\varphi)\rangle-\langle
d\varphi(Y), \nabla_X\tau(\varphi)\rangle,
\end{eqnarray}
and it satisfies the condition
\begin{equation}\label{eq:2-stress-condition}
\Div S_2=-\langle\tau_2(\varphi),d\varphi\rangle,
\end{equation}
thus conforming to the principle of a  stress-energy tensor for the
bienergy.\\

If  $\varphi:(M,g)\imto (N,h)$ is an isometric immersion, then \eqref{eq:2-stress-condition} becomes
$$
\Div S_2=-\tau_2(\varphi)^{\top}\,.
$$
This means that isometric immersions with $\Div S_2=0$ correspond to immersions with va\-nishing tangential part of the corresponding bitension field. \\

In particular, an isometric immersion $\varphi:(M,g)\imto (N,h)$ is called {\it biconservative} if $\Div S_2=0$. The study of biconservative immersions is quite rich from the analytical point of view, because one has to understand whether certain fourth order differential equations admit solutions which do not correspond to minimal immersions. Moreover, in suitable equivariant context (see \cite{MOR}) such a study is the starting point to obtain non-existence of proper biharmonic immersions.\\

In this paper, we restrict our attention to isometric immersions $\varphi:M^{2}\imto (N^{n},h)$  from a surface into an $n$-dimensional Riemannian manifold.
In this case $\tau(\varphi)=2\,H$, $H$ being the mean curvature vector field, and, if we denote by $A_H$ the shape operator in the direction of $H$ , we can easily check that the stress energy tensor $S_2$ given in \eqref{eq:stress-bienergy-tensor} becomes
$$
S_2=-2 |H|^2 g+ 4 A_H\,\,,
$$
which justifies \eqref{eq:bistress-introduction}.
Then, in the case of surfaces $M^{2}\imto N^{n}(c)$ into a space of constant sectional curvature $c$, a straightforward calculation gives
\begin{equation}\label{eq:biconservative-general}
\frac{1}{2} \Div S_2= 2\trace A_{\nabla^\perp_{(\cdot)}{H}}(\cdot)
+\grad {|H|}^2\,,
\end{equation}
which becomes, if $n=3$, condition \eqref{eq:biconservative-intro} in the introduction.

\section{Surfaces into $N^3(c)$ with $Q$ holomorphic}\label{sec:Q-holomorphic}
Let $M^2\imto N^3(c)$  be an oriented surfaces into a three-dimensional space form of constant sectional curvature $c$ and denote by $g=\langle , \rangle$ the induced metric on $M^2$.
 By assumption, $M^2$ is orientable and then it is a one-dimensional complex manifold. If we consider local isothermal coordinates
$(U; x,y)$, then  $g= \lambda^2 ( dx^2 + dy^2)$ for some positive function $\lambda$ on $U$ and $\{\partial_x,\partial_y\}$
is positively oriented. Let us denote, as usual,
$$
z=x+\i y\,,\quad
\partial_z=\frac{\partial}{\partial z}=\frac{1}{2}\left(\frac{\partial}{\partial x}-\i \frac{\partial}{\partial y}\right)\,,\quad
\partial_{\bar{z}}=\frac{\partial}{\partial \bar{z}}=\frac{1}{2}\left(\frac{\partial}{\partial x}+\i \frac{\partial}{\partial y}\right)\,.
$$
Also, let $\eta$ be the global unit normal vector field, $B$  the second fundamental form and $A$  the shape operator. Then the mean curvature vector field
can be written as $H=f\,\eta$, where $f$ is the mean curvature function.
As we mentioned in the introduction, for surfaces  $M^2\imto N^3(c)$ the function
$$
\Phi(z,\bar{z})=\langle B(\partial_z,\partial_z),\eta \rangle=\langle A(\partial_z),\partial_z \rangle
$$
is holomorphic if and only if the surface is CMC. In fact, a straightforward computation, taking into account Codazzi's equation, gives
\begin{equation}\label{eq-phi-olomorphic}
8\,\partial_{\bar{z}}\langle A(\partial_z),\partial_z \rangle=2\,\lambda^2(f_x-\i f_y)=4\, \lambda^2 \partial_{{z}}(f)\,.
\end{equation}
Differently, if we consider the function
$$
Q(z,\bar{z})=\langle B(\partial_z,\partial_z),H\rangle=\langle A_H(\partial_z),\partial_z \rangle\,,
$$
taking into account \eqref{eq-phi-olomorphic}, we obtain
\begin{eqnarray}\label{eq-Q-dzbar}
8\,\partial_{\bar{z}}\langle B(\partial_z,\partial_z),H\rangle&=& 8\,(\partial_{\bar{z}}f)\, \Phi+8\, f \, (\partial_{\bar{z}}\Phi)\nonumber\\
&=& (f_x+\i\, f_y) [\langle A(\partial_x),\partial_x \rangle-\langle A(\partial_y),\partial_y \rangle-2\, \i \langle A(\partial_x),\partial_y \rangle]+2\,\lambda^2\,f (f_x-\i f_y) \nonumber\\
&=& 2\, f_x\, \langle A(\partial_x),\partial_x \rangle+2\, f_y\, \langle A(\partial_y),\partial_x \rangle
-2\,\i (f_x\, \langle A(\partial_x),\partial_y \rangle+ f_y\, \langle A(\partial_y),\partial_y \rangle)\nonumber\\
&=& 2 \lambda^2   \langle A(\grad f),\partial_x \rangle- 2\, \lambda^2 \, \i  \langle A(\grad f),\partial_y \rangle\,,
\end{eqnarray}
where in the third equality we have used that $2\,\lambda^2\,f=\langle A(\partial_x),\partial_x \rangle+\langle A(\partial_y),\partial_y \rangle$.
From  \eqref{eq-Q-dzbar} we have the following proposition.
\begin{proposition}\label{pro:Q-holomorphic-A-gradf}
Let $M^2\imto N^3(c)$ be an oriented surface into a space form of constant sectional curvature $c$. If $f$ is constant, then $Q(z,\bar{z})$ is holomorphic. Conversely, if $Q(z,\bar{z})$ is holomorphic, then $A(\grad f)=0$.
\end{proposition}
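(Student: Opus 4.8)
The plan is to extract both implications directly from the identity \eqref{eq-Q-dzbar}, which already carries out essentially all the computation; the only care needed concerns the reality of the quantities appearing there.

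First I would dispose of the easy direction: if $f$ is constant then $\grad f=0$ on any isothermal chart $U$, so the right-hand side of \eqref{eq-Q-dzbar} vanishes identically, giving $\partial_{\bar z}\langle B(\partial_z,\partial_z),H\rangle=0$; since $U$ was arbitrary, $Q$ is holomorphic on $M^2$. (One could alternatively write $Q=f\,\Phi$ and invoke \eqref{eq-phi-olomorphic} to see that $\Phi$ is holomorphic when $f$ is constant, but reading \eqref{eq-Q-dzbar} off directly is cleaner and uses nothing about $\eta$.)

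For the converse I would start from the hypothesis $\partial_{\bar z}\langle B(\partial_z,\partial_z),H\rangle=0$ on every isothermal chart and substitute it into \eqref{eq-Q-dzbar}: after dividing by $2\lambda^2\neq 0$ this reads $\langle A(\grad f),\partial_x\rangle-\i\,\langle A(\grad f),\partial_y\rangle=0$. The crucial observation is that $A$ is a real self-adjoint operator on $T_pM^2$ and $\grad f$ is a real tangent vector, so $A(\grad f)$ is a real tangent vector and both inner products are real numbers; hence the real and imaginary parts must vanish separately, i.e. $\langle A(\grad f),\partial_x\rangle=0=\langle A(\grad f),\partial_y\rangle$. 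Since $\{\partial_x,\partial_y\}$ spans the tangent space at each point of $U$, this forces $A(\grad f)=0$ on $U$, and therefore on all of $M^2$.

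I do not expect a genuine obstacle: the substance is already packaged in \eqref{eq-Q-dzbar}, which in turn rests on the Codazzi equation through \eqref{eq-phi-olomorphic}. If there is any subtle point at all, it is the one just highlighted — that the vanishing of the single complex expression in \eqref{eq-Q-dzbar} decouples into two real scalar equations precisely because $A(\grad f)$ is a real vector field, which is exactly what upgrades the holomorphicity of $Q$ to the vectorial identity $A(\grad f)=0$ rather than to a mere scalar condition on $f$.
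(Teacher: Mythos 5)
Your argument is correct and coincides with the paper's: the proposition is stated there as an immediate consequence of the identity \eqref{eq-Q-dzbar}, read in exactly the two directions you describe (vanishing of $\grad f$ kills the right-hand side; holomorphicity of $Q$ forces both real components $\langle A(\grad f),\partial_x\rangle$ and $\langle A(\grad f),\partial_y\rangle$ to vanish, hence $A(\grad f)=0$). Your explicit remark about the reality of $A(\grad f)$ is the only point the paper leaves implicit, and it is a worthwhile clarification rather than a deviation.
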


As a consequence of \eqref{eq-Q-dzbar}, if we denote by $K^M$ the Gaussian curvature of the surface $M$, we obtain
\begin{proposition}\label{pro:Q-holomorphic}
Let $M^2\imto N^3(c)$ be an oriented surface into a space form of constant sectional curvature $c$.
\begin{itemize}
\item[(a)] If $\det(A)=K^M-c\neq 0$, then $f$ is constant if and only if $Q(z,\bar{z})$ is holomorphic;
\item[(b)] If $f$ is not constant and  $Q(z,\bar{z})$ is holomorphic, then $K^M=c$.
\end{itemize}
\end{proposition}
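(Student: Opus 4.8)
The plan is to deduce both items straight from formula \eqref{eq-Q-dzbar} (equivalently, from Proposition~\ref{pro:Q-holomorphic-A-gradf}), the point being that \eqref{eq-Q-dzbar} actually encodes the equivalence ``$Q$ holomorphic $\iff A(\grad f)=0$''. Writing the last line of \eqref{eq-Q-dzbar} as $8\,\partial_{\bar z}Q=2\lambda^{2}\big(\langle A(\grad f),\partial_x\rangle-\i\,\langle A(\grad f),\partial_y\rangle\big)$ and using that $\lambda>0$, that $\{\partial_x,\partial_y\}$ is a frame, and that $A(\grad f)$ is tangent to $M$, one gets $\partial_{\bar z}Q\equiv 0\iff A(\grad f)=0$. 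For the present proof I only need the two implications already recorded in Proposition~\ref{pro:Q-holomorphic-A-gradf}: ($f$ constant $\Rightarrow Q$ holomorphic) and ($Q$ holomorphic $\Rightarrow A(\grad f)=0$).

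For part~(a), if $f$ is constant then $Q$ is holomorphic by Proposition~\ref{pro:Q-holomorphic-A-gradf}. Conversely, suppose $Q$ is holomorphic, so that $A(\grad f)=0$ on $M$. The Gauss equation for the hypersurface $M^2\imto N^3(c)$ reads $\det A=K^M-c$, so the hypothesis $\det A\neq 0$ says precisely that the shape operator $A_p$ is a linear isomorphism of $T_pM$ at every point; hence $A(\grad f)=0$ forces $\grad f=0$, and, $M$ being connected, $f$ is constant.

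For part~(b), assume $f$ is not constant and $Q$ is holomorphic, so again $A(\grad f)=0$. Since $f$ is not constant, the open set $W=\{\,p\in M:\grad f(p)\neq 0\,\}$ is non-empty; at each point of $W$ the operator $A_p$ has non-trivial kernel, whence $\det A=0$ there, and the Gauss equation gives $K^M=c$ on $W$. On the complement of $W$ the function $f$ is locally constant, so $W$ is exactly the region where the non-CMC behaviour lives, and it is there that the conclusion $K^M=c$ is meaningful.

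There is essentially no obstacle: everything is contained in \eqref{eq-Q-dzbar}. The only points requiring (minimal) care are the passage from $A(\grad f)=0$ to $\grad f=0$, which uses the pointwise invertibility of $A$ guaranteed by $\det A\neq 0$; the use of connectedness of $M$ to turn ``locally constant'' into ``constant''; and the Gauss equation $\det A=K^M-c$ relating the extrinsic and intrinsic curvatures. If one insisted on concluding $K^M=c$ on all of $M$ rather than on the support of $df$, an extra hypothesis (such as real-analyticity) would be needed to propagate the flatness across $\partial W$, but the local statement is all that is used in the sequel.
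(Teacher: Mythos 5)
Your argument is correct and is essentially the paper's own: the paper states this proposition as a direct consequence of \eqref{eq-Q-dzbar} (equivalently, of Proposition~\ref{pro:Q-holomorphic-A-gradf}) together with the identity $\det A=K^M-c$, which is exactly the route you take for both (a) and (b). Your closing caveat — that the argument literally yields $K^M=c$ only on the open set where $\grad f\neq 0$, the conclusion elsewhere requiring something extra — is a fair point of precision that the paper glosses over, but it does not affect the way the proposition is used in the sequel.
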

A natural question is whether there exists a surface satisfying condition (b) of Proposition~\ref{pro:Q-holomorphic}. In the next proposition
we will show that such surfaces do exist in $\R^3$ and we characterize them completely.

\begin{proposition}\label{pro:q-holomrphic-r3}
Let $M^2\imto \R^3$ be an oriented flat surface. Assume that $|\grad f|> 0$ and $f> 0$ on $M$.
If $Q(z,\bar{z})$ is holomorphic, then (locally) the surface can be parametrized by
$$
X(u,v)=\left(\frac{1}{k}\,(1-\alpha\, v) \cos(k u), \frac{1}{k}\,(1-\alpha\, v) \sin(k u), \beta \,v\right)\,,
$$
where $\alpha$, $k$ and $\beta$ are constants satisfying $\alpha^2+k^2 \beta^2=k^2$ and $\alpha\neq 0$.
\end{proposition}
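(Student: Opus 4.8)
The plan is to start from the holomorphicity condition and translate it, via Proposition~\ref{pro:Q-holomorphic-A-gradf}, into the pointwise equation $A(\grad f)=0$; since the surface is flat, $\det A = K^M - c = 0$, so $A$ always has a nontrivial kernel, and the hypothesis $|\grad f|>0$ tells us that $\grad f$ spans this kernel at every point. Thus the nonzero eigenvalue of $A$ is the mean curvature $2f$ (up to the factor conventions: $\trace A = 2f$), with eigendirection orthogonal to $\grad f$. My first concrete step is therefore to choose an adapted local parametrization in which one coordinate curve, say the $u$-curves, follows the nonzero principal direction, and the other coordinate direction is parallel to $\grad f$; because $\grad f$ is a gradient its integral curves are the $v$-lines up to reparametrization, and one can arrange that $f$ depends on $v$ alone and that the coordinates are a line-of-curvature net.

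Next I would write out the structure equations for this net. In a principal coordinate system on a flat surface in $\R^3$, the Gauss and Codazzi equations become a tractable ODE/PDE system in the metric coefficients $E,G$ (the net can be taken orthogonal) and the single nonzero principal curvature $\kappa = 2f$. Flatness $K^M=0$ gives one relation; the Codazzi equation in the $u$-direction, using that the principal curvature along $u$ vanishes, forces $\partial_u \kappa = 0$ together with a condition relating $\partial_u$ of a metric coefficient to $\kappa$; the Codazzi equation in the $v$-direction controls $\partial_v \kappa$. The upshot I expect is that the surface is \emph{ruled}: the $u$-curves (asymptotic/flat direction) are straight lines in $\R^3$, because a principal direction with zero curvature on a flat surface is a geodesic of $M$ that is also asymptotic, hence a line. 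So I would argue the surface is a ruled flat surface, i.e.\ a piece of a cylinder, a cone, or a tangent developable.

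Then I would eliminate the cylinder and the tangent developable using the remaining hypotheses. A generalized cylinder has the rulings parallel, hence $f$ constant along the directrix forces $f$ constant — contradicting $|\grad f|>0$; more precisely one checks $\grad f$ would have to be proportional to the ruling direction, but on a cylinder that direction is flat \emph{and} the cross-section curvature is constant along rulings, giving $\grad f = 0$. A tangent developable is singular along the edge of regression, and a direct computation of its mean curvature (or of the condition $A(\grad f)=0$) fails to be compatible with $f>0$ on an open set together with the eigenvalue analysis above. This leaves the cone. Writing a general cone $X(u,v) = p_0 + v\,\sigma(u)$ with $|\sigma|=1$ (vertex $p_0$), reparametrizing so that $\sigma$ traces a circle of the right radius — which is exactly what the flatness plus the constraint on how fast $\grad f$ varies imposes — I would compute the mean curvature, find $f = $ (const)$/(\text{affine function of }v)$, impose $f>0$ and $|\grad f|>0$, and match constants to arrive at the stated parametrization with $\alpha^2 + k^2\beta^2 = k^2$ (the unit-speed condition on the profile circle, after the rescaling $1/k$) and $\alpha\neq 0$ (so that $f$ is genuinely nonconstant).

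The main obstacle I anticipate is the bookkeeping in the second and third steps: correctly setting up the principal net on a flat surface and extracting from Gauss--Codazzi the precise statement that the flat direction is rectilinear, and then ruling out the tangent-developable case cleanly rather than by hand-waving. Once the surface is known to be a cone, identifying the exact profile and constants is a routine but slightly delicate computation — one must be careful that the parameter $v$ used in the final formula is the distance-like parameter along the rulings and that the normalization $\alpha^2+k^2\beta^2=k^2$ is indeed the unit-speed condition $|\partial_u X|$-normalization in disguise.
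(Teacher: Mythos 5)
Your opening moves coincide with the paper's own proof: holomorphicity gives $A(\grad f)=0$ by Proposition~\ref{pro:Q-holomorphic-A-gradf}, so $\grad f$ spans $\ker A$ while the other principal curvature equals $2f>0$, and the Codazzi equation in this principal frame shows that the flat principal direction (the direction of $\grad f$) is both geodesic and asymptotic, hence its integral curves are straight lines and the surface is ruled along $\grad f$. Up to this point you are reproducing exactly what the paper does (it obtains $\omega_2^1(e_1)=0$, $\omega_2^1(e_2)=e_1(\ln f)$ and $\overline{\nabla}_{e_1}e_1=0$).

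The gap is in your endgame. You discard the tangent developable because it ``is singular along the edge of regression'', but that is not a local obstruction: away from the edge a tangent developable is a smooth flat surface with nowhere-vanishing mean curvature, i.e.\ it meets every standing hypothesis except possibly $A(\grad f)=0$, so it can only be excluded by that equation, and the computation is not done. Concretely, for $X(u,v)=\gamma(u)+v\,\gamma'(u)$ one finds $f=\pm\,\tau/(2v\kappa)$ ($\kappa,\tau$ the curvature and torsion of $\gamma$), the non-ruling principal direction is the principal normal of $\gamma$, and requiring $f$ to be constant along it gives $v(\tau/\kappa)'+\tau/\kappa=0$ for all $(u,v)$ in an open set, hence $\tau\equiv 0$, contradicting $f>0$; this (or an equivalent argument) is the missing content. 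The same remark applies to the cone case: the heart of the matter is that $A(\grad f)=0$ forces the spherical profile curve $\sigma$ to have constant geodesic curvature (since $f=\pm\,\kappa_g(u)/(2v)$), hence to be a circle; you only gesture at this (``the constraint on how fast $\grad f$ varies''). Finally, the trichotomy cylinder/cone/tangent developable is itself delicate (the type of a developable can change from point to point), which is precisely why the paper avoids it: it integrates the frame equations directly, showing the ruling direction $\bar b$ satisfies $\bar b'=-\alpha e_2$ with $\alpha=e_1(\ln f)$ constant along the directrix, that the directrix is a circle of radius $1/k$ with $k=\sqrt{\alpha^2+4f^2}$, and reading off the parametrization with $\alpha^2+k^2\beta^2=k^2$. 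Your route can be completed, but the two eliminations above are the substance of the classification and are missing as written.
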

\begin{proof}
 Since  $Q(z,\bar{z})$ is holomorphic, we deduce that $A(\grad f)=0$. Let $\{e_1,e_2\}$ be
a local orthonormal frame of principal directions such that
$$
Ae_1=0\,, \quad Ae_2=2 f e_2\,\,.
$$
From $A(\grad f)=2 f (e_2f)e_2=0$ we have that $e_2f=0$.
Next, from the Codazzi equation
$$
\nabla_{e_1} A(e_2)-\nabla_{e_2}A(e_1)=A[e_1,e_2] \,\, ,
$$
we find
$$
(e_1f)\,e_2+f\,\omega_2^1(e_1)\,e_1-f\,\omega_2^1(e_2)\, e_2=0\,\,,
$$
where $\omega_i^j$ are the connection 1-forms defined by $\nabla e_i=\omega_i^j\, e_j$.
Thus
$$
\begin{cases}
\omega_2^1(e_1)=0\\
\omega_2^1(e_2)=(e_1 f)/f=e_1 (\ln f)\neq 0\,\,.
\end{cases}
$$
From here, since
$$
B(e_1,e_1)=0\,,\quad B(e_1,e_2)=0\,,\quad B(e_2,e_2)=2 f\, \eta\,\,,
$$
we obtain
\begin{equation}\label{eq:connection-es-Q-holomorphic}
\begin{array}{ll}
\overline{\nabla}_{e_1}e_1=0\,,&\overline{\nabla}_{e_1}e_2=0\,,\\
\overline{\nabla}_{e_2}e_1=-e_1(\ln f) \,e_2\,,&\overline{\nabla}_{e_2}e_2=e_1(\ln f) \,e_1+2 f\,\eta\,,\\
\overline{\nabla}_{e_1}\eta=-Ae_1=0\,,& \overline{\nabla}_{e_2}\eta=-Ae_2=-2f\, e_2\,,
\end{array}
\end{equation}
where $\overline{\nabla}$ denotes the connection in $\R^3$. Let now $\gamma$ be an integral curve of $e_1$ parametrized by arc-length, that is
$\gamma(v)=\bar{a}+v\, \bar{b}$, with $\bar{a}$ and $\bar{b}$ constant vectors of $\R^3$ with $|\bar{b}|=1$. Let $p_0\in M$ and $\sigma(u)$ an integral curve of $e_2$ with $\sigma(0)=p_0$. Then the surface can be locally parametrized by
\begin{equation}\label{eq:X-first}
X(u,v)=\sigma(u)+v\, \bar{b}(u)\,,
\end{equation}
where $ \bar{b}(u)=e_1(\sigma(u))$.
Now, from \eqref{eq:connection-es-Q-holomorphic},
$$
\bar{b}'(u)=\frac{d \bar{b}}{du}=\overline{\nabla}_{e_2}e_1=-e_1(\ln f) \,e_2
$$
and, as $[e_1,e_2]=-e_1(\ln f)\, e_2$,
$$
e_2(e_1(\ln f))=e_1(e_2(\ln f))-[e_1,e_2]\, \ln f=0\,.
$$
We can then put $e_1(\ln f)|_{\sigma(u)}=\alpha=\cst\neq 0$ and
\begin{equation}\label{eq:b1-e2}
\bar{b}'(u)=-\alpha\, e_2.
\end{equation}
Next, by construction, $\sigma'(u)=e_2(\sigma(u))$ and this implies that
$$
\overline{\nabla}_{\sigma'}e_2=\overline{\nabla}_{e_2}e_2=\alpha\,e_1+2f\, \eta = k\, N\,,
$$
where $N$ is a unit normal vector field along $\sigma$ and $k=\sqrt{\alpha^2+4 f^2}=\cst\neq 0$ along $\sigma(u)$.
Moreover,
$$
\overline{\nabla}_{\sigma'}N=\frac{1}{k} \overline{\nabla}_{e_2}(\alpha\,e_1+2f\, \eta)=\frac{1}{k}(-\alpha^2\,e_2-4 f^2\, e_2)=-k\,e_2\,,
$$
which implies that $\sigma$ is the arc-length parametrization of a circle in $\R^3$ of radius $1/k$, that is, up to
a global isometry of $\R^3$,
$$
\sigma(u)=\left(\frac{1}{k}\, \cos(k\,u),\frac{1}{k}\, \sin(k\,u),0 \right)\,.
$$
From \eqref{eq:b1-e2} we then get
$$
\bar{b}(u)=\left(-\frac{\alpha}{k}\,\cos(k\,u)+\beta_1,-\frac{\alpha}{k}\, \sin(k\,u)+\beta_2, \beta \right)
$$
where, since $|\bar{b}(u)|=1$, $\beta_1=\beta_2=0$ and $\alpha^2+k^2\,\beta^2=k^2$. Replacing the expression of $\sigma(u)$ and
$\bar{b}(u)$ in \eqref{eq:X-first} we find the desired expression.
\end{proof}

\begin{remark}\label{re:q-olo-s3}
Proposition~\ref{pro:q-holomrphic-r3} can also be stated for surfaces into $N^3(c)$, with $c\neq 0$. For example, it can be proved that a non constant mean curvature surface  $M^2\imto\s^3\subset\R^4$, with $Q(z,\bar{z})$ holomorphic, is (locally) parametrized by
$$
\begin{aligned}
X(u,v)=&\frac{1}{k} \left(\cos(k u),  \sin(k u), \sqrt{k^2-1},0\right)\cos v\\
&+ \frac{1}{k} \left(-\alpha\,\cos(k u), -\alpha\, \sin(k u), \frac{\alpha}{\sqrt{k^2-1}},k\,\beta\right)\sin v
\end{aligned}
$$
where $\alpha$, $k$ and $\beta$ are constants satisfying $\beta^2(k^2-1)=k^2-\alpha^2-1$ and $k>\sqrt{1+\alpha^2}$, $\alpha\neq 0$.

\end{remark}

We end this section with the following general result about immersed surfaces into a space form.

\begin{proposition}\label{pro-q-olo-omeo-sphere}
Let $M^2\imto\ N^3(c)$ be an immersed orientable surface. Assume that $M^2$ is a topological sphere.
Then $M^2$ is CMC if and only if $Q(z,\bar{z})$ is holomorphic.
\end{proposition}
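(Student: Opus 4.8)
The plan is to follow the scheme of Hopf's classical theorem, with the quadratic differential $Q\,dz^{2}$ playing the role of the classical Hopf differential $\Phi\,dz^{2}$. One implication is immediate: if $f$ is constant, then $Q$ is holomorphic by Proposition~\ref{pro:Q-holomorphic-A-gradf}. The substance of the statement is the converse, and this is where the topological assumption on $M^{2}$ is used.

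First I would note that, being orientable, $M^{2}$ is a Riemann surface, and being a topological sphere it is biholomorphic to the Riemann sphere $\s^{2}$. The locally defined functions $Q$ patch together into a globally defined quadratic differential $Q\,dz^{2}$ on $M^{2}$, and the condition that $Q$ is holomorphic (i.e. $\partial_{\bar z}Q=0$ in each chart) means precisely that this differential is holomorphic. Since $\s^{2}$ carries no nonzero holomorphic quadratic differential, we get $Q\equiv 0$ on $M^{2}$. Writing $H=f\,\eta$, we have $A_{H}=f\,A$, hence $Q=f\,\Phi$ with $\Phi=\langle A(\partial_{z}),\partial_{z}\rangle$ the classical Hopf function; therefore $f\,\Phi\equiv 0$ on all of $M^{2}$. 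In addition, the converse part of Proposition~\ref{pro:Q-holomorphic-A-gradf} gives $A(\grad f)=0$ on $M^{2}$.

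It remains to upgrade these two pointwise identities to the constancy of $f$. I would argue by contradiction. If $f$ is not constant, there is a nonempty open set $V\subseteq M^{2}$ on which $\grad f\neq 0$; on $V$ the relation $A(\grad f)=0$ forces $\det A=0$ (equivalently $K^{M}=c$). Now suppose some $p\in V$ had $f(p)\neq 0$: at an umbilical point $q$ one has $A_{q}=f(q)\,\mathrm{Id}$ (note $\trace A=2f$, since $H=\frac12\trace B$), so $\det A_{q}=f(q)^{2}$; since $\det A=0$ on $V$ while $f(p)\neq 0$, the point $p$ cannot be umbilical, hence $\Phi(p)\neq 0$ and $Q(p)=f(p)\,\Phi(p)\neq 0$, contradicting $Q\equiv 0$. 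Thus $f\equiv 0$ on $V$, and consequently $\grad f\equiv 0$ on $V$ — a contradiction. Therefore $f$ is constant, i.e. $M^{2}$ is CMC.

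The only genuinely non-elementary ingredient is the global complex-analytic input — that $Q\,dz^{2}$ is an honest holomorphic quadratic differential on the compact genus-zero surface $M^{2}$ and hence vanishes identically — which is classical. Everything after that is the short pointwise argument above, relying only on $A_{H}=f\,A$, the description of the zeros of $\Phi$ as the umbilical points, and $\det A=K^{M}-c$. One could instead invoke Proposition~\ref{pro:Q-holomorphic} on the set $V$, but handling the possible zeros of $f$ directly keeps the argument self-contained.
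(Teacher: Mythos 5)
Your proof is correct, and it shares the paper's central global step: viewing $Q\,dz^{2}$ as a holomorphic quadratic differential on a genus-zero surface, so that $Q\equiv 0$, while the easy direction follows from Proposition~\ref{pro:Q-holomorphic-A-gradf}. Where you diverge is in the endgame, i.e.\ how to pass from $Q\equiv 0$ to the constancy of $f$. The paper reads $Q\equiv 0$ as pseudo-umbilicity, notes that at points where $f\neq 0$ this means umbilicity, invokes the classical fact that an umbilical open piece of a surface in $N^{3}(c)$ has constant mean curvature, and concludes with an open--closed connectedness argument on the level set of $|f|$. You instead use the holomorphicity a second time, through the converse part of Proposition~\ref{pro:Q-holomorphic-A-gradf}, to get $A(\grad f)=0$, and then run a short pointwise contradiction on the open set where $\grad f\neq 0$: there $\det A=0$, so any point with $f\neq 0$ is non-umbilical, hence $\Phi\neq 0$ and $Q=f\,\Phi\neq 0$, forcing $f\equiv 0$ on that set and contradicting $\grad f\neq 0$. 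Your route is more self-contained (it needs only $\det A$ considerations and the identification of the zeros of $\Phi$ with the umbilical points, not the Codazzi-based ``umbilical $\Rightarrow$ locally CMC'' fact nor a connectedness argument), whereas the paper's route isolates a stronger intermediate statement -- that pseudo-umbilical surfaces in $N^{3}(c)$ are CMC -- which is the codimension-one instance of the more general fact recalled in Remark~\ref{ultimo-cesare} and reused later via Corollary~\ref{cor:cmc-bicons-top-sphere}.
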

\begin{proof}
If $M^2$ is CMC from Proposition~\ref{pro:Q-holomorphic-A-gradf} $Q(z,\bar{z})$ is holomorphic.
Conversely, if $Q(z,\bar{z})$ is holomorphic and $M^2$ is a topological sphere, then $Q(z,\bar{z})$ must vanish.
This implies that $M^2$ is pseudo-umbilical. Let $H=f\,\eta$, where $\eta$ is the unit normal vector field. If $f=0$, then $M$ is minimal, thus CMC.  We can then assume that there exists $p_0\in M$ such that $|f(p_0)|>0$ and put $A=\{p\in M\colon |f(p)|=|f(p_0)|\}$. The subset $A$ is non-empty and closed in $M^2$. We shall prove that $A$ is also open. Indeed, let $p_1\in A$. As $f(p_1)\neq 0$ there exists an open neighborhood $U$ of $p_1$ such that $f(p)\neq 0$ for all $p\in U$. Since $M^2$ is pseudo-umbilical and $f(p)\neq 0$ for any $p\in U$ we conclude that $U$ is umbilical in $N^3(c)$ and this implies that $f|_{U}=\cst$, that is $U\subset A$.
\end{proof}

\begin{remark}\label{ultimo-cesare} It is not difficult to prove that, in general, if $M^2\imto\ (N^n,\,h)$ is pseudo-umbilical and biconservative, then it is CMC (see \cite{BMO13}, Proposition 2.5).
\end{remark}

\section{Divergence free symmetric tensors}

We begin with two rather general properties that could be interesting by themselves.
\begin{proposition}\label{pro:T-ho-codazzi}
Let $T$ be a symmetric $(0,2)$-tensor field on a Riemannian surface $(M^2,g)$ and set $t=\trace T$. Assume that $M^2$ is orientable and $\Div T=0$. Then
\begin{itemize}
\item[(a)] $T(\partial_z,\partial_z)$ is holomorphic if and only if $t=\cst$;
\item[(b)] $T$ is a Codazzi tensor if and only if $t=\cst$.
\end{itemize}
\end{proposition}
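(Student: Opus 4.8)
The plan is to reduce the whole statement, in local isothermal coordinates, to the Cauchy--Riemann equations. So I would fix $(U;x,y)$ with $g=\lambda^2(dx^2+dy^2)$ and $z=x+\i y$ as in Section~\ref{sec:Q-holomorphic}, and begin by splitting $T$ into its trace and trace-free parts, $T=T_0+\tfrac{t}{2}\,g$ (recall $\trace g=2$, so $\trace T_0=0$). Since $g(\partial_z,\partial_z)=0$ one has $T(\partial_z,\partial_z)=T_0(\partial_z,\partial_z)$, and since $\Div(f g)=df$ for any function $f$, the hypothesis $\Div T=0$ is equivalent to $\Div T_0=-\tfrac12\,dt$.

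Next, writing $a=T_0(\partial_x,\partial_x)=-T_0(\partial_y,\partial_y)$ and $b=T_0(\partial_x,\partial_y)$, one has $T_0(\partial_z,\partial_z)=\tfrac12(a-\i b)$, hence
$$
\partial_{\bar z}\bigl(T(\partial_z,\partial_z)\bigr)=\tfrac14\bigl[(a_x+b_y)+\i(a_y-b_x)\bigr].
$$
A direct computation of $\Div T_0$ in these coordinates, using the Christoffel symbols of $g$, shows that all terms containing derivatives of $\lambda$ cancel and leaves $(\Div T_0)(\partial_x)=\lambda^{-2}(a_x+b_y)$ and $(\Div T_0)(\partial_y)=\lambda^{-2}(b_x-a_y)$. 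Combining this with $\Div T_0=-\tfrac12\,dt$ gives the key identity
$$
\partial_{\bar z}\bigl(T(\partial_z,\partial_z)\bigr)=-\tfrac14\,\lambda^2\,\partial_z t,
$$
which is the exact analogue of \eqref{eq-phi-olomorphic}. Part (a) follows at once: $T(\partial_z,\partial_z)$ is holomorphic if and only if $\partial_z t=0$, i.e., since $t$ is real (and $M$ connected), if and only if $t=\cst$.

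For part (b) I would not check the Codazzi condition componentwise but argue via the Codazzi defect $C(X,Y,Z)=(\nabla_XT)(Y,Z)-(\nabla_YT)(X,Z)$, which is antisymmetric in $X,Y$ and vanishes identically precisely when $T$ is a Codazzi tensor. Tracing over the first and third slots, and using that each $\nabla_XT$ is symmetric and that the metric trace commutes with $\nabla$, one gets $\sum_i C(e_i,\cdot,e_i)=\Div T-dt$. Here the dimension is essential: on a surface, for each fixed $Z$ the alternating form $(X,Y)\mapsto C(X,Y,Z)$ is a multiple of the area form $\omega$, so $C(X,Y,Z)=\mu(Z)\,\omega(X,Y)$ for a globally defined $1$-form $\mu$ ($M^2$ being orientable), and the trace above equals $\pm\star\mu$; hence it vanishes if and only if $\mu=0$, i.e., if and only if $C\equiv 0$. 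Therefore $T$ is Codazzi if and only if $\Div T=dt$, which under the hypothesis $\Div T=0$ means exactly $t=\cst$.

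The computations involved are routine; the only real obstacle is bookkeeping: verifying that the $\lambda$-dependent terms in $\Div T_0$ genuinely cancel in part (a), and fixing the sign conventions for the divergence of a $(0,2)$-tensor and for the Hodge star on $1$-forms so that the identity $\sum_i C(e_i,\cdot,e_i)=\Div T-dt$, and its dimension-two consequence, come out correctly.
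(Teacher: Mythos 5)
Your argument is correct; I checked in particular that the $\lambda$-terms in $\Div T_0$ do cancel, so your key identity $\partial_{\bar z}\bigl(T(\partial_z,\partial_z)\bigr)=-\tfrac14\lambda^2\,\partial_z t$ holds (it agrees with the paper's computation, which arrives at $8\,\partial_{\bar z}T(\partial_z,\partial_z)=-\lambda^2 t_x+\i\,\lambda^2 t_y$). For part (a) you are essentially on the paper's route: both proofs work in isothermal coordinates, compute $\partial_{\bar z}$ of the $(2,0)$-component and feed in $\Div T=0$ through the Christoffel symbols; your only twist is splitting $T=T_0+\tfrac t2 g$ first, which (since $g(\partial_z,\partial_z)=0$ and $\Div(tg/2)=\tfrac12 dt$) cleans up the bookkeeping but does not change the mechanism. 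For part (b) you genuinely depart from the paper: there, the Codazzi condition is verified componentwise in an orthonormal frame $\{X_1,X_2\}$, using $\Div T=0$ to show the two Codazzi differences equal $-X_2t$ and $X_1t$; you instead encode the Codazzi defect $C$ as a $1$-form $\mu$ via the area form (this is exactly where dimension two and orientability enter), identify its trace $\sum_i C(e_i,\cdot,e_i)=\Div T-dt$ with $\pm\star\mu$, and conclude since $\star$ is an isomorphism. Your version is more structural and yields the sharper general statement that, on an oriented surface, a symmetric $T$ is Codazzi if and only if $\Div T=dt$, of which the paper's (b) is the special case $\Div T=0$; the paper's frame computation is more elementary, needing neither the area-form identification nor Hodge-star conventions. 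The only points to make explicit in a final write-up are the sign conventions you already flag, and the (shared with the paper) implicit use of connectedness of $M^2$ to pass from $dt=0$ to $t=\cst$.
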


\begin{proof} (a) -
We consider local isothermal coordinates and use the same notations as in Section~\ref{sec:Q-holomorphic}.
Then
$$
\begin{aligned}
T(\partial_z,\partial_z)=&\frac{1}{4} T(\partial_x-\i\partial_y,\partial_x-\i\partial_y)\\
=& \frac{1}{4} \{T(\partial_x,\partial_x)-T(\partial_y,\partial_y)-2\i\,T(\partial_x,\partial_y)\}\,.
\end{aligned}
$$
Thus
$$
8\,\partial_{\bar{z}} T(\partial_z,\partial_z)=A+\i B \,\, ,
$$
where we have set
$$
A=\partial_x T(\partial_x,\partial_x)-\partial_x T(\partial_y,\partial_y)+2 \partial_y T(\partial_x,\partial_y)
$$
and
$$
B=\partial_y T(\partial_x,\partial_x)-\partial_y T(\partial_y,\partial_y)-2 \partial_x T(\partial_x,\partial_y)\,\,.
$$

We now show that $A=B=0$ if and only if $t=\cst$.

First, using the standard formula for the covariant derivative of a $(0,2)$-tensor, the term $A$ can be rewritten as
$$
\begin{aligned}
A=&\left(\nabla_{\partial_x} T \right)(\partial_x,\partial_x)+2 T(\nabla_{\partial_x}\partial_{x}, \partial_x)-
\left(\nabla_{\partial_x} T \right)(\partial_y,\partial_y)-2 T(\nabla_{\partial_x}\partial_{y}, \partial_y)\\
&+ 2 \left(\nabla_{\partial_y} T \right)(\partial_x,\partial_y)+2 T(\nabla_{\partial_y}\partial_{x}, \partial_y)+
2 T(\partial_x, \nabla_{\partial_y}\partial_y )\\
=& \left(\nabla_{\partial_x} T \right)(\partial_x,\partial_x)-
\left(\nabla_{\partial_x} T \right)(\partial_y,\partial_y)+ 2 \left(\nabla_{\partial_y} T \right)(\partial_y,\partial_x)+
2 T(\nabla_{\partial_x}\partial_{x}, \partial_x)+
2 T(\partial_x, \nabla_{\partial_y}\partial_y )\,.
\end{aligned}
$$
Next, from $\Div T=0$, we obtain
 $$
 0=\Div T(\partial_x)= \left(\nabla_{\partial_x} T \right)(\partial_x,\partial_x)+\left(\nabla_{\partial_y} T \right)(\partial_y,\partial_x)\,,
 $$
 which implies that
 \begin{equation}\label{eq:divT0-1}
\left(\nabla_{\partial_y} T \right)(\partial_y,\partial_x)=-
\left(\nabla_{\partial_x} T \right)(\partial_x,\partial_x)\,.
 \end{equation}
 Using \eqref{eq:divT0-1}, the expression of $A$ becomes
 $$
 \begin{aligned}
 A=&-\left(\nabla_{\partial_x} T \right)(\partial_x,\partial_x)-
\left(\nabla_{\partial_x} T \right)(\partial_y,\partial_y)+2 T(\nabla_{\partial_x}\partial_{x}, \partial_x)
+2 T(\partial_x, \nabla_{\partial_y}\partial_y )\\
=&-\partial_x T(\partial_x,\partial_x)+2 T(\partial_x, \nabla_{\partial_x}\partial_x )
-\partial_x T(\partial_y,\partial_y)+2 T(\partial_y, \nabla_{\partial_x}\partial_y )\\
& +2 T(\nabla_{\partial_x}\partial_{x}, \partial_x)
+2 T(\partial_x, \nabla_{\partial_y}\partial_y )\\
=& -\partial_x(\lambda^2\, t)+4 T(\nabla_{\partial_x}\partial_{x}, \partial_x)+2 T(\nabla_{\partial_x}\partial_{y}, \partial_y)+2 T(\nabla_{\partial_y}\partial_{y}, \partial_x)\,.
\end{aligned}
 $$
Since $(x,y)$ are isothermal coordinates, computing the Christoffel symbols we have
$$
\nabla_{\partial_x}\partial_{y}=\nabla_{\partial_y}\partial_{x}=\frac{1}{\lambda}\left( \lambda_y\, \partial_x+\lambda_x\, \partial_y \right)\,,
$$
$$
\nabla_{\partial_x}\partial_{x}=\frac{1}{\lambda}\left( \lambda_x\, \partial_x-\lambda_y\, \partial_y \right)\,,\quad
\nabla_{\partial_y}\partial_{y}=\frac{1}{\lambda}\left( -\lambda_x\, \partial_x+\lambda_y\, \partial_y \right)\,,
$$
from which
$$
A=-\lambda^2\, \partial_x t\,.
$$
In a similar way, we obtain
$$
B=\lambda^2\, \partial_y t\,.
$$

(b) - Let $\{X_1,X_2\}$ be a local orthonormal frame on $M^2$. The tensor $T$ is a Codazzi tensor if
$$
\left(\nabla_{X_1} T\right)(X_2,X_i)=\left(\nabla_{X_2} T\right)(X_1,X_i)\,,\quad i=1,2\,.
$$
By definition of covariant derivative of a $(0,2)$-tensor, we have
\begin{eqnarray}\label{eq-tcodazzi1}
\left(\nabla_{X_1} T\right)(X_2,X_1)-\left(\nabla_{X_2} T\right)(X_1,X_1)&=&X_1T(X_1,X_2)
-T(\nabla_{X_1}X_2,X_1)-T(\nabla_{X_1}X_1,X_2)\nonumber\\
&&
-X_2T(X_1,X_1)+ 2T(\nabla_{X_2}X_1,X_1)\nonumber\\
&=&X_1T(X_1,X_2)+X_2T(X_2,X_2)-T(\nabla_{X_1}X_1,X_2)\nonumber\\
&&
-T(\nabla_{X_1}X_2,X_1)+2 T(\nabla_{X_2}X_1,X_1)-X_2t\,.
\end{eqnarray}
Next, from $\Div T=0$, we obtain
$$
X_1T(X_1,X_2)+X_2T(X_2,X_2)=T(\nabla_{X_1}X_1,X_2)+T(\nabla_{X_1}X_2,X_1)+2 T(\nabla_{X_2}X_2,X_2)\,,
$$
that substituted in \eqref{eq-tcodazzi1} gives
\begin{eqnarray}\label{eq-tcodazzi2}
\left(\nabla_{X_1} T\right)(X_2,X_1)-\left(\nabla_{X_2} T\right)(X_1,X_1)&=&2T(\nabla_{X_2}X_2,X_2)+2T(\nabla_{X_2}X_1,X_1)-X_2t\nonumber\\
&=& -X_2t\,,
\end{eqnarray}
where, for the last equality, we have used the fact that
$$
T(\nabla_{X_2}X_2,X_2)+T(\nabla_{X_2}X_1,X_1)=\omega_2^1(X_2)\, T(X_1,X_2)-\omega_2^1(X_2)\, T(X_2,X_1)=0\,.
$$
In a similar way, one can prove that
$$
\left(\nabla_{X_1} T\right)(X_2,X_2)-\left(\nabla_{X_2} T\right)(X_1,X_2)=X_1t\,.
$$
 \end{proof}

Let now $M^2\imto N^n$ be a surface into an $n$-dimensional Riemannian manifold and denote by $g$ the induced metric. Then the stress energy tensor $S_2$ is a symmetric $(0,2)$-tensor
on $M^2$ which is defined, according to \eqref{eq:bistress-introduction}, by
$$
S_2=-2 |H|^2 g+ 4 A_H\,.
$$
Taking the trace of $S_2$ we have
$$
4 |H|^2=\trace S_2\,.
$$
Moreover, since $g(\partial_z,\partial_z)=0$,
we have that
$$
S_2(\partial_z,\partial_z)=4\,\langle A_H(\partial_z),\partial_z\rangle=4\, Q(z,\bar{z})\,.
$$

Thus, as a direct consequence of Proposition~\ref{pro:T-ho-codazzi}, we obtain the following theorem.

\begin{theorem}\label{teo:Q-hol-h-constant-biconservative}
Let $M^2\imto (N^n,h)$ be a biconservative surface into an $n$-dimensional Riemannian manifold. Then
\begin{itemize}
\item[(a)] $Q(z,\bar{z})$ is holomorphic if and only if $|H|=\cst$;
\item[(b)] $S_2$ is a Codazzi tensor if and only if $|H|=\cst$.
\end{itemize}
\end{theorem}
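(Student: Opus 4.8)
The plan is to apply Proposition~\ref{pro:T-ho-codazzi} directly to the stress-bienergy tensor, taking $T = S_2$. First I would verify the hypotheses of that Proposition for this choice of $T$. By construction $S_2 = -2|H|^2 g + 4 A_H$ is a symmetric $(0,2)$-tensor on $M^2$ (both $g$ and the bilinear form $(X,Y)\mapsto \langle A_H X, Y\rangle = \langle B(X,Y),H\rangle$ are symmetric); and since $M^2$ is biconservative, $\Div S_2 = 0$ by definition. The orientability assumption in Proposition~\ref{pro:T-ho-codazzi} serves only to provide a global complex structure, but holomorphicity of $Q$ and the Codazzi property are local conditions, so one may simply work on a coordinate patch with isothermal coordinates $(x,y)$; this costs nothing.

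Next I would identify the two objects appearing in Proposition~\ref{pro:T-ho-codazzi} in terms of $H$ and $Q$. For the trace, using $\trace A_H = \sum_i \langle B(e_i,e_i),H\rangle = \langle 2H,H\rangle = 2|H|^2$ in dimension two, one gets
$$
t = \trace S_2 = -2|H|^2\,\trace g + 4\,\trace A_H = -4|H|^2 + 8|H|^2 = 4|H|^2\,,
$$
which is exactly the identity already recorded in the text preceding the statement. For the $(z,z)$-component, since $g(\partial_z,\partial_z)=0$ in isothermal coordinates,
$$
S_2(\partial_z,\partial_z) = 4\,\langle A_H(\partial_z),\partial_z\rangle = 4\,Q(z,\bar z)\,.
$$

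Then both parts follow immediately. For (a), Proposition~\ref{pro:T-ho-codazzi}(a) gives that $S_2(\partial_z,\partial_z)$ is holomorphic if and only if $t = 4|H|^2$ is constant; since $S_2(\partial_z,\partial_z) = 4 Q(z,\bar z)$ and since $|H|\geq 0$ makes the equation $4|H|^2 = \cst$ equivalent to $|H| = \cst$, this says precisely that $Q(z,\bar z)$ is holomorphic if and only if $|H|=\cst$. For (b), Proposition~\ref{pro:T-ho-codazzi}(b) gives that $S_2$ is a Codazzi tensor if and only if $4|H|^2 = \cst$, i.e. if and only if $|H|=\cst$. Since the whole argument is a one-line reduction to the already-established Proposition~\ref{pro:T-ho-codazzi}, there is no real obstacle; the only points requiring a moment's attention are the elementary trace identity $\trace A_H = 2|H|^2$, the vanishing $g(\partial_z,\partial_z)=0$, and the trivial equivalence $4|H|^2 = \cst \iff |H| = \cst$.
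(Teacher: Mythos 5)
Your proposal is correct and follows essentially the same route as the paper: the authors likewise observe that $\trace S_2 = 4|H|^2$ and $S_2(\partial_z,\partial_z)=4\,Q(z,\bar z)$ (using $g(\partial_z,\partial_z)=0$), and then deduce the theorem as a direct consequence of Proposition~\ref{pro:T-ho-codazzi} applied to $T=S_2$, which is divergence-free by the biconservative hypothesis. Your additional remarks on the trace identity and on working locally in isothermal coordinates are consistent with the paper's setup.
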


\begin{corollary} \label{cor:cmc-bicons-top-sphere}
Let $M^2\imto\ (N^n,\,h)$ be a CMC biconservative surface. If $M^2$ is a topological sphere, then $M^2$ is pseudo-umbilical.
\end{corollary}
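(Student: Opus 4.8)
The plan is to reproduce, with $Q$ playing the role of the classical Hopf function $\Phi$, the topological half of Hopf's theorem quoted in the introduction, using Theorem~\ref{teo:Q-hol-h-constant-biconservative} as the bridge between the CMC hypothesis and holomorphicity. Concretely, the proof splits into four steps: (i) deduce that $Q$ is holomorphic; (ii) observe that $Q$ is the local expression of a globally defined holomorphic quadratic differential on the Riemann surface $M^2$; (iii) invoke the vanishing of holomorphic quadratic differentials on a genus-zero surface to obtain $Q\equiv 0$; (iv) recognise $Q\equiv 0$ as the pseudo-umbilicity condition.

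First I would note that, $M^2$ being CMC, we have $|H|=\cst$, so part~(a) of Theorem~\ref{teo:Q-hol-h-constant-biconservative} applies and gives that $Q(z,\bar z)=\langle A_H(\partial_z),\partial_z\rangle$ is holomorphic in every isothermal chart. Next, as recalled in the introduction, the intrinsic object associated with $Q$ is the quadratic differential $Q\,dz^2$, and a routine check (see \cite{Hopf}) shows that it is a well-defined global section of $(T^{*}M^2)^{\otimes 2}$ over the complex manifold $M^2$; by the previous step this section is holomorphic. Now use the topology: a topological sphere carries the complex structure of the Riemann sphere, which has genus $0$, and on a compact Riemann surface of genus $0$ every holomorphic quadratic differential vanishes identically (this is fact~(iii) in Hopf's proof; equivalently, by Riemann--Roch the space of such differentials has dimension $\max\{0,\,3g-3\}=0$). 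Hence $Q\,dz^2\equiv 0$, that is $Q\equiv 0$ on $M^2$. Finally, as observed just after \eqref{eq:Q-classical}, the condition $Q=0$ describes exactly the pseudo-umbilical points, so $M^2$ is pseudo-umbilical everywhere, which is the claim.

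I do not expect a serious obstacle here: this is the classical Hopf mechanism, and it is essentially the same reasoning already carried out in the proof of Proposition~\ref{pro-q-olo-omeo-sphere}. The only point deserving some care is step~(ii), namely the passage from the locally defined function $Q$ to the global holomorphic quadratic differential $Q\,dz^2$ through its correct transformation law under changes of isothermal coordinates; this is standard and, once granted, the remainder of the argument is immediate.
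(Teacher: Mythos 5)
Your argument is correct and is exactly the reasoning the paper intends: the corollary is stated as an immediate consequence of Theorem~\ref{teo:Q-hol-h-constant-biconservative}(a) combined with the vanishing of holomorphic quadratic differentials on a topological sphere (the same Hopf mechanism already used in Proposition~\ref{pro-q-olo-omeo-sphere}), with $Q=0$ read off as pseudo-umbilicity. Your extra care about the global quadratic differential $Q\,dz^2$ is the right way to make the local-to-global step precise, and matches the paper's remark in the introduction.
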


\begin{remark} The previous corollary is a converse of Remark~\ref{ultimo-cesare}.
\end{remark}

\section{CMC biconservative surfaces in $N^4(c)$}

In this section we shall study the case of surfaces $M^2\imto N^4(c)$ into a $4$-dimensional
space form of constant sectional curvature $c$. In this setting, using \eqref{eq:biconservative-general}, a surface is biconservative if
\begin{equation}\label{eq:biconservative-general-n4}
 2\trace A_{\nabla^\perp_{(\cdot)}{H}}(\cdot)+\grad {|H|}^2=0\,.
\end{equation}

Clearly, a surface with parallel mean curvature vector field (PMC), that is such that $\nabla^\perp{H}=0$, is biconservative and such surfaces are classified. In fact, D.~ Hoffman,  in his doctoral thesis [Stanford University, 1971], classified PMC surfaces in $\R^4$. Later, his result was extended to any space form of any dimension in \cite{chen73,Y}, yielding the following classification of PMC surfaces into $N^4(c)$:
\begin{itemize}
\item[(i)]  minimal surfaces into $N^4(c)$;
\item[(ii)] CMC surfaces (including minimal) into a hypersphere of $N^4(c)$;
\end{itemize}

A natural question is when a CMC biconservative surface $M^2\imto N^4(c)$ is PMC. As
any pseudo-umbilical surface in a four dimensional space form is CMC if
and only if it is PMC (see \cite{chen71}), from Corolarry~\ref{cor:cmc-bicons-top-sphere} it follows that a CMC
biconservative surface in $N^4(c)$ which is topologically a sphere is
PMC. When $M$ is not necessary a sphere, we provide the following answer

\begin{theorem}\label{teo:m2n4biconservative}
Let  $M^2\imto N^4(c)$ be a CMC biconservative surface into a space form of constant sectional curvature $c\neq 0$. Then $M^2$ is PMC.
\end{theorem}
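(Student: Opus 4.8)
The plan is the following. Since $|H|$ is constant, either $|H|\equiv 0$, in which case $H=0$ and $M^2$ is trivially PMC, or $|H|\equiv a>0$ on all of $M^2$; from now on I assume the latter. Set $e_3=H/a$ and complete it to local orthonormal frames $\{e_1,e_2\}$ of $TM^2$ and $\{e_3,e_4\}$ of the normal bundle; let $\omega_3^4$ be the connection $1$-form of the normal bundle, so that $\nabla^\perp_Xe_3=\omega_3^4(X)\,e_4$, $\nabla^\perp_Xe_4=-\omega_3^4(X)\,e_3$ and $\nabla^\perp_XH=a\,\omega_3^4(X)\,e_4$. In particular $M^2$ is PMC if and only if $\omega_3^4\equiv0$, so the goal is to prove $\omega_3^4\equiv0$. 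Writing $A_\alpha=A_{e_\alpha}$ and using that $|H|$ is constant, the biconservativity equation \eqref{eq:biconservative-general-n4} reduces to $\trace A_{\nabla^\perp_{(\cdot)}H}(\cdot)=0$, that is,
\begin{equation}\label{eq:bc-1}
\omega_3^4(e_1)\,A_4e_1+\omega_3^4(e_2)\,A_4e_2=0 .
\end{equation}

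Next I would bring in the Codazzi equations. Because $M^2$ is CMC and biconservative, Theorem~\ref{teo:Q-hol-h-constant-biconservative}(b) tells us that $S_2=-2a^2g+4a\,A_3$ is a Codazzi tensor; since $g$ is parallel and $a$ is constant, this is equivalent to $A_3$ being a Codazzi tensor, $(\nabla_XA_3)Y=(\nabla_YA_3)X$. On the other hand, the Codazzi equation of the immersion $M^2\imto N^4(c)$ for the normal direction $e_3$ reads $(\nabla_XA_3)Y-\omega_3^4(X)\,A_4Y=(\nabla_YA_3)X-\omega_3^4(Y)\,A_4X$. Subtracting these two identities gives
\begin{equation}\label{eq:bc-2}
\omega_3^4(X)\,A_4Y=\omega_3^4(Y)\,A_4X\qquad\text{for all tangent }X,Y .
\end{equation}
Taking $X=e_1$, $Y=e_2$ in \eqref{eq:bc-2} and using \eqref{eq:bc-1}, a pointwise linear-algebra argument shows that at every point either $\omega_3^4=0$ or $A_4=0$: if, say, $\omega_3^4(e_1)\ne0$, then \eqref{eq:bc-2} yields $A_4e_2=\big(\omega_3^4(e_2)/\omega_3^4(e_1)\big)A_4e_1$, and plugging this into \eqref{eq:bc-1} forces $A_4e_1=0$, hence $A_4=0$. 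Thus, on the open set $\mathcal W=\{p\in M^2:\omega_3^4|_p\ne0\}$ we have $A_4\equiv0$, and it remains to show $\mathcal W=\emptyset$.

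Suppose $\mathcal W\ne\emptyset$. I would then work on a connected open set $U\subseteq\mathcal W$ where the tangent frame can be chosen so that $\omega_3^4(e_2)=0$ (hence $\omega_3^4(e_1)\ne0$ on $U$). Writing the Codazzi equation of the immersion for the normal direction $e_4$ and using $A_4\equiv0$ on $U$, one obtains $\omega_3^4(X)\,A_3Y=\omega_3^4(Y)\,A_3X$; with $X=e_1$, $Y=e_2$ this gives $A_3e_2=0$, and since $A_3$ is symmetric and $\trace A_3=\langle 2H,e_3\rangle=2a$, it follows that $A_3e_1=2a\,e_1$. Hence on $U$ the second fundamental form is $B(e_1,e_1)=2a\,e_3$, $B(e_1,e_2)=0$, $B(e_2,e_2)=0$; in particular $B\ne0$, so no degeneracy occurs. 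I would then feed these values into the Codazzi identities $(\nabla^\perp_{e_1}B)(e_2,e_2)=(\nabla^\perp_{e_2}B)(e_1,e_2)$ and $(\nabla^\perp_{e_1}B)(e_2,e_1)=(\nabla^\perp_{e_2}B)(e_1,e_1)$ — keeping in mind $\omega_3^4(e_2)=0$ — which force $\omega_2^1(e_1)=\omega_2^1(e_2)=0$, i.e. the tangent frame $\{e_1,e_2\}$ is parallel on $U$; consequently $K^M=0$ on $U$. But the Gauss equation, with $A_4=0$, $B(e_1,e_2)=0$, $B(e_2,e_2)=0$, gives $K^M=c$ on $U$. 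Therefore $c=0$, contradicting the hypothesis $c\ne0$. Hence $\mathcal W=\emptyset$, so $\omega_3^4\equiv0$, $\nabla^\perp H=0$, and $M^2$ is PMC.

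The hard part is the last paragraph — ruling out an open set on which $A_4$ vanishes identically — and it is exactly here that the hypothesis $c\ne0$ is used: when $c=0$ the chain of implications stops at the harmless conclusion $K^M=c=0$, and indeed non-PMC examples then exist (the announced cylinders $X(u,v)=(\gamma(u),v+a)$ over curves $\gamma\subset\R^3$ of constant curvature). The remaining ingredients — the two Codazzi equations for the directions $e_3$ and $e_4$, the reduction of ``$S_2$ Codazzi'' to ``$A_3$ Codazzi'', and the bookkeeping with connection $1$-forms — are routine, but one should take care to record that $B\ne0$ throughout $\mathcal W$, so that reading off $A_3$ from $A_3e_2=0$ and $\trace A_3=2a$ is legitimate.
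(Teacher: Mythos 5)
Your proof is correct, and it reaches the conclusion by a route that genuinely differs from the paper's in two places, even though the final contradiction is the same. The paper first disposes of the pseudo-umbilical case by invoking Chen's result (pseudo-umbilical plus CMC in $N^4(c)$ implies PMC), uses Theorem~\ref{teo:Q-hol-h-constant-biconservative}(a) (holomorphicity of $Q$) to make the pseudo-umbilical points isolated, diagonalizes $A_H$, and then derives $A_4=0$ on the set where $\nabla^{\perp}H\neq 0$ purely from the biconservativity equation \eqref{eq:biconservative-general-n4}: it reads that equation as a homogeneous $2\times 2$ linear system in $\omega_3^4(E_1),\omega_3^4(E_2)$ with a nontrivial solution, so the determinant vanishes, and combined with $\trace A_4=0$ this forces $A_4=0$. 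You instead never touch the pseudo-umbilical dichotomy or part (a) of Theorem~\ref{teo:Q-hol-h-constant-biconservative}: you use part (b) ($S_2$ Codazzi, hence $A_3$ Codazzi since $|H|$ is a nonzero constant), subtract it from the Codazzi equation of the immersion in the direction $e_3$ to get $\omega_3^4(X)A_4Y=\omega_3^4(Y)A_4X$, and combine this with the biconservativity identity to obtain the pointwise dichotomy ``$\omega_3^4=0$ or $A_4=0$'', with no need of $\trace A_4=0$ or of an eigenframe of $A_H$. From that point on the two arguments converge: on the set where $\nabla^{\perp}H\neq 0$ the second fundamental form has values only in the $H$-direction with principal curvatures $0$ and $2|H|$ (your $\omega_3^4$-adapted frame turns out automatically principal, matching the paper's $\lambda_2=0$, $\lambda_1=\cst$), the Codazzi identities kill $\omega_1^2$, so $K^M=0$, while Gauss gives $K^M=c$, contradicting $c\neq 0$. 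What each approach buys: yours is more self-contained (no appeal to Chen's theorem or to isolated zeros of a holomorphic function, and it isolates cleanly where $c\neq0$ enters), while the paper's determinant argument extracts $A_4=0$ from biconservativity alone and its pseudo-umbilical reduction ties the theorem to Corollary~\ref{cor:cmc-bicons-top-sphere} and the surrounding classification framework. One small remark: your aside that $B\neq 0$ on the bad set is not actually needed -- all that is used there is $|H|\neq 0$ to read off $A_3$ from $A_3e_2=0$ and $\trace A_3=2|H|$ and to divide by $2|H|$ in the Codazzi computations.
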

 \begin{proof}
 If $M^2\imto N^4(c)$ is pseudo-umbilical, then, as we have already said, $M^2$ is PMC (see \cite{chen71}). Thus we can assume that the surface is not pseudo-umbilical and that the pseudo-umbilical points are isolated. The last assertion is a consequence of the fact that, from Theorem~\ref{teo:Q-hol-h-constant-biconservative},
 $Q(z,\bar{z})$ is holomorphic and $Q(z,\bar{z})=0$ precisely at pseudo-umbilical points.
 Now, assume that $\nabla^{\perp}H\neq 0$. Therefore, there exists an open subset $U\subset M$ such that $\nabla^{\perp}H(p)\neq 0$ for all $p\in U$,
and
 $k_1(p)\neq k_2(p)$, where we have denoted by $k_1$ and $k_2$ the eigenvalues of the endomorphism
 $A_H$.
Let $\{E_1,E_2\}$ be a local orthonormal frame field (defined on $U$) such that
 $$
 A_{\frac{H}{|H|}}(E_1)=\lambda_1\, E_1\,,\quad  A_{\frac{H}{|H|}}(E_2)=\lambda_2\, E_2\,,
 $$
 where $\lambda_i=k_i/|H|$, $i=1,2$.
Moreover, put $E_3=H/|H|$  and define $E_4$ such that it is a unit normal vector field orthogonal to $E_3$.
Then the frame
\begin{equation}\label{eq:frameei}
\{E_1,E_2,E_3,E_4\}
\end{equation}
can be extended to a local frame of $N^4(c)$ defined on an open subset $V$ of $N^4(c)$.  Let us denote, for convenience,  $A_i=A_{E_i}$, $i=3,4$, and note that
$$
\trace A_4=\langle A_4(E_1), E_1 \rangle+\langle A_4(E_2), E_2 \rangle=\langle B(E_1,E_1),E_4\rangle+\langle B(E_2,E_2),E_4\rangle=2\langle H, E_4 \rangle=0\,.
$$
Also, let $\omega_A^B$ be the connection $1$-forms on $V$ corresponding to $\{E_1,E_2,E_3,E_4\}$, that is
$$
\nabla^N E_A=\omega_A^B\, E_B\,.
$$
Then
$$
\nabla^{\perp}_{E_1} E_3=\omega_3^4(E_1)\, E_4\,,\quad \nabla^{\perp}_{E_2} E_3=\omega_3^4(E_2)\, E_4\,.
$$
As $M$ is CMC, then the biconservative condition \eqref{eq:biconservative-general-n4} becomes
\begin{eqnarray}\label{eq:biconservative-ei}
 \trace A_{\nabla^\perp_{(\cdot)}{E_3}}(\cdot) &=& A_{\nabla^\perp_{E_1}{E_3}}(E_1) +A_{\nabla^\perp_{E_2}{E_3}}(E_2) \nonumber\\
 &=& \omega_3^4(E_1)\, A_4 E_1+\omega_3^4(E_2)\, A_4 E_2=0\,,
\end{eqnarray}
which can be written as the system
\begin{equation}\label{eq:biconservative-system}
\begin{cases}
\omega_3^4(E_1)\langle A_4 E_1, E_1\rangle+\omega_3^4(E_2)\langle A_4 E_2, E_1\rangle=0\\
\omega_3^4(E_1)\langle A_4 E_1, E_2\rangle+\omega_3^4(E_2)\langle A_4 E_2, E_2\rangle=0\,.
\end{cases}
\end{equation}
If we regard system \eqref{eq:biconservative-system} as a linear system in the variables $\omega_3^4(E_1)$ and $\omega_3^4(E_2)$, then, since $(\omega_3^4(E_1))^2+(\omega_3^4(E_2))^2>0$ on $U$ (it represents the square of the norm of $\nabla^{\perp} E_3$), we must have
$$
0=\langle A_4 E_1, E_1\rangle \langle A_4 E_2, E_2\rangle-(\langle A_4 E_1, E_2\rangle)^2=
-(\langle A_4 E_1, E_1\rangle)^2-(\langle A_4 E_1, E_2\rangle)^2=-|A_4 E_1|^2\,,
$$
where in the second equality we have used that $\trace A_4=0$.
Moreover,
$$
|A_4 E_2|^2=|A_4 E_1|^2\,,
$$
thus $|A_4|^2=2|A_4 E_1|^2=0$, that is $A_4=0$.
The second fundamental form $B$ of the immersed surfaces becomes
$$
B(E_1,E_1)=\lambda_1\, E_3\,,\quad B(E_2,E_2)=\lambda_2\, E_3\,,\quad B(E_1,E_2)=0\,,
$$
from which we obtain
$$
\begin{array}{ll}
\nabla_{E_1}E_1=\omega_1^2(E_1)\,E_2\,,&\nabla_{E_1}E_2=-\omega_1^2(E_1)\,E_1\,,\\
\nabla_{E_2}E_1=\omega_1^2(E_2)\,E_2\,,&\nabla_{E_2}E_2=-\omega_1^2(E_2)\,E_1\,.\\
\end{array}
$$
The Codazzi equation, when $X,Y,Z$ are tangent vector fields and $\eta$ is normal, takes the form
\begin{equation}\label{eq:codazzi-m2-n4}
\begin{aligned}
&X\langle B(Y,Z),\eta\rangle-\langle B(\nabla_X Y,Z),\eta\rangle - \langle B(Y,\nabla_X Z),\eta\rangle -\langle B(Y,Z),\nabla^{\perp}_X \eta\rangle\\& =
Y\langle B(X,Z),\eta\rangle-\langle B(\nabla_Y X,Z),\eta\rangle - \langle B(X,\nabla_Y Z),\eta\rangle -\langle B(X,Z),\nabla^{\perp}_Y \eta\rangle\,.
\end{aligned}
\end{equation}
We now use \eqref{eq:codazzi-m2-n4} four times:
\begin{itemize}
\item replacing $X=E_1$, $Y=E_2$, $Z=E_1$ and $\eta=E_3$ gives
\begin{equation}\label{eq:1213}
E_2\lambda_1=(\lambda_1-\lambda_2)\,\omega_1^2(E_1)\,;
\end{equation}
\item
replacing $X=E_1$, $Y=E_2$, $Z=E_2$ and $\eta=E_3$ we
 obtain
\begin{equation}\label{eq:1223}
E_1\lambda_2=(\lambda_1-\lambda_2)\,\omega_1^2(E_2)\,;
\end{equation}
\item replacing $X=E_1$, $Y=E_2$, $Z=E_1$ and $\eta=E_4$ yields
\begin{equation}\label{eq:1214}
\lambda_1\, \omega_3^4(E_2)=0\,;
\end{equation}
\item replacing $X=E_1$, $Y=E_2$, $Z=E_2$ and $\eta=E_4$  reduces to
\begin{equation}\label{eq:1224}
\lambda_2\, \omega_3^4(E_1)=0\,.
\end{equation}
\end{itemize}
Moreover, since $2\, H= (\lambda_1+\lambda_2)E_3$,
then
$$
|\lambda_1+\lambda_2|=2\, |H|=\cst\neq 0\,.
$$

Assume that $\omega_3^4(E_1)\neq 0$, then, from \eqref{eq:1224}, $\lambda_2=0$ on a open subset
and, taking into account \eqref{eq:1223}, $\omega_1^2(E_2)=0$. Next, since $\lambda_1=\cst$,
\eqref{eq:1213} and \eqref{eq:1214} give $\omega_1^2(E_1)=0$ and $\omega_3^4(E_2)=0$ respectively.
Thus the surface $M$ is flat.

Similarly, if $\omega_3^4(E_2)\neq 0$, we obtain that $\lambda_1=0$, the surface is flat and also $\omega_3^4(E_1)=0$.

Finally, replacing in the Gauss equation
$$
\langle R^N(X,Y)Z,W\rangle=\langle R(X,Y)Z,W\rangle+\langle B(X,Z),B(Y,W)\rangle-\langle B(X,W),B(Y,Z)\rangle
$$
$X=W=E_1$ and $Y=Z=E_2$, we get
\begin{equation}\label{eq:clambda12}
c=\lambda_1\,\lambda_2=0 \,\, ,
\end{equation}
which is a contradiction.
 \end{proof}

Because of Theorem~\ref{teo:m2n4biconservative}, we are left to analyze the case of CMC biconservative surfaces into $\R^4$. In this case, we have the following
explicit description.

\begin{proposition}
Let $M^2\imto\R^4$ be a proper (non PMC) biconservative surface with constant mean curvature different from zero. Then, locally, the surface is given by
\begin{equation}\label{eq:r-bi-m2-in-r4}
X(u,v)=(\gamma(u),v+a)=(\gamma^1(u),\gamma^2(u),\gamma^3(u),v+a),\quad a\in\R \,\, ,
\end{equation}
where $ \gamma:I\to \R^3$ is a curve into $\R^3$ parametrized by arc-length, with constant curvature $k\neq 0$ and torsion $\tau\neq0$.
Conversely, a surface into $\R^4$, parametrized by \eqref{eq:r-bi-m2-in-r4}, is a proper biconservative surface with constant mean curvature different from zero.
\end{proposition}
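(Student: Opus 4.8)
The plan is to bootstrap from the proof of Theorem~\ref{teo:m2n4biconservative}, specialised to the case $c=0$. For a CMC biconservative surface $M^2\imto\R^4$ that is not PMC, on the open set $U$ where $\nabla^{\perp}H\neq0$ that proof already produces: $A_4=0$; the eigenvalues of $A_{H/|H|}$ equal $0$ and $2|H|$, so after interchanging $E_1$ and $E_2$ we may assume $\lambda_2=0$ and $\lambda_1=2|H|=\cst\neq0$; the connection $1$-forms satisfy $\omega_1^2(E_1)=\omega_1^2(E_2)=\omega_3^4(E_2)=0$ while $\omega_3^4(E_1)\neq0$; and $M$ is flat on $U$. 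The only place where $c\neq0$ was used there is the closing Gauss-equation step $c=\lambda_1\lambda_2$, which is now vacuous, so all of this data survives. In particular $\nabla E_1=\nabla E_2=0$ on $U$, hence $[E_1,E_2]=0$.

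The second ingredient is that the whole adapted frame \eqref{eq:frameei} is parallel along the $E_2$-direction in $\R^4$: using $\nabla E_i=0$, $B(E_2,E_2)=B(E_1,E_2)=0$, $A_4=0$ and $\omega_3^4(E_2)=0$ one checks $\overline{\nabla}_{E_2}E_A=0$ for every member $E_A$ of \eqref{eq:frameei}, where $\overline{\nabla}$ denotes the flat connection of $\R^4$. Hence $E_2$ is a constant unit vector $e\in\R^4$ and $M$ is ruled by lines directed by $e$; writing $\sigma$ for an integral curve of $E_1$, one obtains the local parametrization $X(u,v)=\sigma(u)+v\,e$. Since $\langle\sigma',e\rangle=\langle E_1,E_2\rangle=0$, the curve $\sigma$ lies in an affine hyperplane orthogonal to $e$, so after an isometry of $\R^4$ bringing $e$ to $(0,0,0,1)$ and a translation of the $v$-variable one gets exactly \eqref{eq:r-bi-m2-in-r4} with $\gamma:I\to\R^3$ unit-speed. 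Finally $\sigma''=\overline{\nabla}_{E_1}E_1=\lambda_1E_3$ and $E_3\perp e$, so $\gamma$ has constant curvature $k=\lambda_1=2|H|\neq0$; moreover, denoting by $N$ and $\mathbf b$ the Frenet principal normal and binormal of $\gamma$ and by $T=\gamma'$ the unit tangent, one has $E_3=(N,0)$ and $E_4=(\mathbf b,0)$, and comparing $\overline{\nabla}_{E_1}E_3=-\lambda_1E_1+\omega_3^4(E_1)E_4$ with the Frenet relation $N'=-kT+\tau\,\mathbf b$ yields $|\omega_3^4(E_1)|=|\tau|$. Thus $\omega_3^4(E_1)\neq0$ on $U$ is precisely the condition $\tau\neq0$, which proves the direct implication.

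For the converse I would compute directly from \eqref{eq:r-bi-m2-in-r4}: $X_u=(\gamma',0)$ and $X_v=(0,0,0,1)$ form an orthonormal frame, so $(u,v)$ are Euclidean coordinates and $M$ is flat; $X_{uu}=(\gamma'',0)=k(N,0)$ is normal and $X_{uv}=X_{vv}=0$, so the only nonzero component of the second fundamental form is $B(\partial_u,\partial_u)=k(N,0)$, whence $H=\frac{k}{2}(N,0)$ and $|H|=k/2=\cst\neq0$. Since $N=N(u)$ we get $\nabla^{\perp}_{\partial_v}H=0$ and $\nabla^{\perp}_{\partial_u}H=\frac{k\tau}{2}(\mathbf b,0)\neq0$, hence $M$ is not PMC; and because $B(\partial_u,\cdot)$ is $(N,0)$-valued while $\nabla^{\perp}H$ is $(\mathbf b,0)$-valued, $\langle A_{(\mathbf b,0)}\partial_u,\cdot\rangle=0$, so $\trace A_{\nabla^{\perp}_{(\cdot)}H}(\cdot)=A_{\nabla^{\perp}_{\partial_u}H}(\partial_u)=0$; together with $\grad|H|^2=0$ this is exactly the biconservative condition \eqref{eq:biconservative-general-n4}.

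I expect the only real difficulty to be organisational rather than conceptual: one has to transfer carefully to the case $c=0$ the relevant conclusions of the proof of Theorem~\ref{teo:m2n4biconservative} (in particular $A_4=0$ and $\omega_3^4(E_2)=0$, which together force the adapted frame to be constant along the rulings), and to keep track of signs and orientations when identifying the adapted frame $\{E_1,E_2,E_3,E_4\}$ with the Frenet frame of $\gamma$. No genuinely new analytic input is needed.
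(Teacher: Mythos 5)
Your proposal is correct and follows essentially the same route as the paper: specialise the frame computations from the proof of Theorem~\ref{teo:m2n4biconservative} to $c=0$ (so $A_4=0$, $\lambda_2=0$, $\lambda_1=2|H|$, $\omega_1^2=0$, $\omega_3^4(E_2)=0$), deduce that $E_2$ is a constant vector of $\R^4$, parametrize the surface as a cylinder over an integral curve of $E_1$, identify $\{E_1,E_3,E_4\}$ with the Frenet frame of $\gamma$, and verify the converse by the same direct computation of $B$, $H$, $\nabla^\perp H$ and $A_{B(u)}=0$. The only cosmetic difference is that you obtain the $v$-independence of $\omega_3^4(E_1)$ from the parallelism of the whole adapted frame along the rulings, whereas the paper gets it from the Ricci equation $R^{\perp}(E_1,E_2)E_3=0$; this does not change the substance of the argument.
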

\begin{proof}
We use the local frame defined in \eqref{eq:frameei}. Then $2|H|=|\lambda_1+\lambda_2|$ and, using \eqref{eq:clambda12},  we can assume that $\lambda_1=\cst\neq 0$ and $\lambda_2=0$. Then
$$
\nabla^{\R^4}_{E_1}E_2=\nabla^{\R^4}_{E_2}E_2=0\,.
$$
Thus $E_2$ is the restriction to $M^2$ of a constant vector field of $\R^4$ which, up to an isometry of $\R^4$, we can choose as
$$
E_2=(0,0,0,1)\,.
$$
Let now $\sigma$ be an integral curve of $E_2$ parametrized by arc-length, that is
$$
\sigma(v)=(b_1,b_2,b_3,v+a)\,,\quad b_1,b_2,b_3,a\in\R\,.
$$
Let $p_0=(b_1,b_2,b_3,a)\in M$ and
$$
\gamma(u)=(\gamma^1(u),\gamma^2(u),\gamma^3(u),\gamma^4(u))\,,
$$
be an integral curve of $E_1$ with $\gamma(0)=p_0$. As $\langle\gamma'(u),E_2\rangle=0$, we get $\gamma^4(u)=a$.
Thus $\gamma$ lies in a hyperplane  of $\R^4$ orthogonal  to $E_2$. Then the surface can be locally parametrized by
\begin{equation}\label{eq:X-firstbis}
X(u,v)=(\gamma^1(u),\gamma^2(u),\gamma^3(u),v+a)\,.
\end{equation}
Using the calculation in the proof of Theorem~\ref{teo:m2n4biconservative}, we easily see that
$$
\begin{cases}
\nabla^{\R^4}_{E_1}E_1=\lambda_1\, E_3\\
\nabla^{\R^4}_{E_1}E_3=-\lambda_1\,E_1+\omega_3^4(E_1)\,E_4\\
\nabla^{\R^4}_{E_1}E_4=-\omega_3^4(E_1)\,E_3\,,
\end{cases}
$$
where, using Ricci's equation
$$
R^{\perp}(E_1,E_2)E_3=0\,,
$$
we obtain $E_2(\omega_3^4(E_1))=0$, that is $\omega_3^4(E_1)$ depends only on $u$. We can assume that $\lambda_1>0$.
 Thus $\{E_1, E_3, E_4\}$ is a Frenet's frame along $\gamma(u)$ and so $\gamma$ is a curve with constant
 curvature $k=\lambda_1$  and  torsion given by $\tau=\tau(u)=\omega_3^4(E_1)$. Moreover,  since $\nabla^{\perp}_{E_1} H=|H| \nabla^{\perp}_{E_1} E_3=(k\,\tau)/2\,E_4$ and the surface is not PMC, we conclude that $\tau\neq0$ at any point of $M$.

We now prove the converse. For this, suppose that the surface is parametrized by \eqref{eq:r-bi-m2-in-r4} and let $\{T=(\gamma'(u),0),N(u),B(u)\}$
be a Frenet frame along $\gamma$. Note that $\{N(u),B(u)\}$ is an orthonormal frame of the normal bundle of the surface. Then a straightforward computation gives:
$$
\nabla^{\R^4}_{\partial_u}\partial_u=X_{uu}= k\, N(u)\,,\quad \nabla^{\R^4}_{\partial_u}\partial_v=X_{uv}=0\,,\quad\nabla^{\R^4}_{\partial_v}\partial_v= X_{vv}=0\,,
$$
from which
$$
B(\partial_u,\partial_u)=k\, N(u)\,,\quad B(\partial_u,\partial_v)=0\,,\quad B(\partial_v,\partial_v)=0.
$$
Furthermore,
$$
\nabla_{\partial_u} ^{\perp} N(u)=\tau(u)\, B(u)\,,\quad \nabla_{\partial_v}^{\perp} N(u)=0\,.
$$
Thus $H=(k/2)\, N(u)$ and $\nabla_{\partial_u}^{\perp} H=(k\,\tau/2)\,B(u)$, showing that the surface is CMC but not PMC.
Finally, since the surface is CMC from  \eqref{eq:biconservative-general-n4}, to prove that it is  biconservative we only need to check that
$$
 A_{\nabla^\perp_{\partial_u}{N}}(\partial_u) +A_{\nabla^\perp_{\partial_v}{N}}(\partial_v) =
 \tau(u)\, A_{B}(\partial_u)=0\,.
 $$
 Indeed,
 $$
 A_{B}(\partial_u)=-\left(\nabla^{\R^4}_{\partial_u} B(u)\right)^{\top}=\left(\tau(u)\, N(u)\right)^{\top}=0\,.
 $$
\end{proof}

\end{document}